\documentclass[lettersize,journal]{IEEEtran}
\usepackage{array}
\usepackage[caption=false,font=normalsize,labelfont=sf,textfont=sf]{subfig}
\usepackage{textcomp}
\usepackage{stfloats}
\usepackage{url}
\usepackage{verbatim}
\hyphenation{op-tical net-works semi-conduc-tor IEEE-Xplore}

\usepackage{array}
\usepackage{url}
\usepackage{amsmath,amsfonts}
\usepackage{amsthm}
\usepackage{tikz}
\usetikzlibrary{matrix}
\usepackage{import}
\usepackage{float}
\usepackage{mathtools}
\usepackage{cite}
\usepackage{graphicx}
\usepackage{bm}
\usepackage{hyperref}
\usepackage{newtxmath} 
\usepackage[noabbrev,nameinlink]{cleveref}
\usepackage{orcidlink} 

\usepackage[dvipsnames]{xcolor}
\definecolor{mycolor1}{RGB}{24, 92, 68} 

\hypersetup{
    colorlinks = true,
    linkcolor = mycolor1,   
    citecolor = mycolor1,   
    urlcolor = Blue          
}

\DeclareSymbolFont{AMSb}{U}{msb}{m}{n}
\DeclareSymbolFontAlphabet{\mathbb}{AMSb}
\DeclareMathOperator*{\diag}{diag}

\newtheorem{assumption}{Assumption}
\newtheorem{lemma}{Lemma}
\newtheorem{proposition}{Proposition}
\newtheorem{theorem}{Theorem}
\newtheorem{definition}{Definition}

\title{\LARGE \bf
Non-Locally Controllable but Trackable Magnetic Head Flagellated Swimmer
}

\author{Lucas Palazzolo$^{1,*}$ \orcidlink{0009-0002-4188-014X}, Mickaël Binois$^{2,\ddagger}$ \orcidlink{0000-0002-7225-1680} and  Laetitia Giraldi$^{1,\dagger}$\orcidlink{0000-0003-2684-0203}
\thanks{$^{1}$Université Côte d'Azur, Inria, Calisto team, Sophia Antipolis, France
        }%
\thanks{$^{2}$Université Côte d'Azur, Inria, Acumes team, CNRS, LJAD, Sophia Antipolis, France}%
\thanks{$^{*}$ {\tt\small lucas.palazzolo@inria.fr}}%
\thanks{$^{\dagger}$ {\tt\small laetitia.giraldi@inria.fr}}%
\thanks{$^{\ddagger}$ {\tt\small mickael.binois@inria.fr}}}

\begin{document}

\maketitle
\thispagestyle{empty}
\pagestyle{empty}

\begin{abstract}
Unlike macroscopic swimmers, microswimmers operate in a low-Reynolds-number regime dominated by viscous forces. This paper investigates the controllability of a magnetic microswimmer composed of a spherical magnetic head and an elastic, non-magnetic flagellum. The swimmer evolves in a Stokes flow and is modeled using the resistive force theory. We prove that, under planar motion, the system is not small-time locally controllable and numerically identify regions that remain inaccessible. Nevertheless, simulations show that trajectory tracking can still be achieved via Bayesian optimization, though it requires large-amplitude transverse deformations.
\end{abstract}

\begin{IEEEkeywords}
Small-Time Local Controllability, Trajectory Tracking, Bayesian Optimization, Microswimmers
\end{IEEEkeywords}

\section{Introduction}
Microswimmers are microorganisms capable of self-propulsion in fluid environments, such as bacteria or bio-inspired microrobots. The application domain of microrobotics covers several fields, notably healthcare, where the objective is to deliver drugs directly to target cells for cancer treatment \cite{medical_robots_cancer_2020, zhang2023}, or environmental engineering, for instance in water purification \cite{soler2013}. \\

A wide range of microrobots exists, employing diverse propulsion mechanisms, such as flagellar motion \cite{zhang2023}, light-based actuation \cite{varun2022}, chemical gradients \cite{zhuang2017}, or external magnetic fields \cite{Dreyfus2005, oulmas2017}. The latter category is the focus of this work. These swimmers consist of a spherical magnetic head attached to a non-magnetic elastic flagellum and operate in a Stokes flow regime. Due to the elasticity of the flagellum, the \textit{scallop theorem} \cite{purcell} is no longer an obstruction, as elastic deformations naturally generate non-reciprocal shape changes. By discretizing the swimmer into $N$ rigid links, the \textit{Resistive Force Theory} (RFT) provides a simple and well-suited framework, leading to what is known as an \textit{N-link swimmer} \cite{giraldi2013}. The resulting dynamics can then be described by an Ordinary Differential Equation (ODE) \cite{moreau2019}.\\

The question of controllability has attracted considerable attention from researchers studying microswimmer models. Using a sub-Riemannian geometric framework, numerous studies have investigated the controllability of active particles in Stokes flow, including squirmers \cite{Tucsnak2016, loheac2020}, spherical swimmers \cite{alouges2013, giraldi2015}, and deformable flagellated swimmers \cite{zoppello2025}. However, only a few works provide non-controllability results. A notable contribution is the series of papers by C.~Moreau, which analyze two-dimensional systems composed of elastic and magnetic flagella \cite{moreau2018, moreau2019}. These results were later generalized to broader classes of equations in \cite{beauchard2018,moreau2024}.\\

In this paper, we propose a three-dimensional formulation based on the RFT to model the fluid–structure interaction, where the flagellum elasticity is represented by spring-like connections between consecutive links \cite{faris2020, zoppello2025}. These magnetic microswimmers show strong potential for biomedical applications \cite{oulmas2017, faris2020}. Building on the controllability framework of \cite{moreau2024}, we establish our main result showing the lack of local controllability of a magnetic swimmer composed of a magnetic head and an elastic flagellum, restricted to planar motion. Extending the analysis to fully three-dimensional motion would require studying an affine system with three controls, a considerably more complex and still open problem. Numerical simulations show regions that remain unreachable near the equilibrium under small oscillatory control inputs. Despite these negative controllability results, trajectory tracking is still feasible, though it requires large-amplitude motions in directions transverse to the desired displacement.\\

The paper is organized as follows. \Cref{sec:1} presents the mathematical modeling of the elastic flagellated microswimmer with a magnetic head. \Cref{sec:2} introduces the main definitions and theoretical results related to \textit{Small Time Local Controllability} (STLC). \Cref{sec:3} presents the main result of the paper concerning the $2$-link swimmer. In \Cref{sec:4}, the proof of the main result is detailed in two parts. \Cref{sec:5} provides numerical results illustrating the behavior near the equilibrium state and trajectory tracking using Bayesian optimization. Finally, \Cref{sec:6} summarizes and concludes the paper.

\section{Mathematical Modeling}\label{sec:1}
This section describes the mathematical model of the elastic microswimmer with a magnetic head. We first detail its parametrization and discretization into $N$ links, followed by the derivation of the governing dynamic equations.

\subsection{Swimmer's parametrization}
The swimmer, illustrated in \Cref{fig:nlinks_illu}, consists of a spherical head of radius $r$, centered at $\boldsymbol{X}\in\mathbb{R}^3$, and a flagellum of length $L$ discretized into $N$ rigid links of length $l=L/N$ connecting $N+1$ points $\boldsymbol{X}^i$. The laboratory frame is $\mathscr{R} = (0_{\mathbb{R}^3}, \boldsymbol{e}_1, \boldsymbol{e}_2, \boldsymbol{e}_3)$, and the head frame is $\mathscr{R}^h = (\boldsymbol{X}, \boldsymbol{e}_1^h, \boldsymbol{e}_2^h, \boldsymbol{e}_3^h)$, with orientation given by the rotation matrix $R^h \in SO(3)$. Using Tait--Bryan angles $(\theta_x, \theta_y, \theta_z)$ with the $ZYX$ convention,
\begin{equation*}
R^h = R_x(\theta_x) R_y(\theta_y) R_z(\theta_z),
\end{equation*}
where $R_x$, $R_y$, and $R_z$ are the standard rotation matrices about the $x$-, $y$-, and $z$-axes, respectively. The flagellum attaches at $\boldsymbol{X}^1 = \boldsymbol{X} - r\boldsymbol{e}_1^h$, and the subsequent points are given by
\begin{equation*}
\boldsymbol{X}^i = \boldsymbol{X}^1 - l\sum_{k=1}^{i-1}\boldsymbol{e}_1^k, \quad i=2,\ldots,N+1.
\end{equation*}
Each link direction $\boldsymbol{e}_1^i$ is parameterized in $\mathscr{R}^h$ by spherical angles $(\phi_y^i, \phi_z^i)$, leading to
\begin{equation*}
R^i = R_y(\phi_y^i) R_z(\phi_z^i), \qquad \boldsymbol{e}_1^i = R^h R^i \boldsymbol{e}_1.
\end{equation*}
For $s\in[0,l]$, the position along link $i$ is
\begin{equation*}
\boldsymbol{x}^i(s) = \boldsymbol{X}^i - sR^hR^i\boldsymbol{e}_1,
\end{equation*}
and its time derivative reads
\begin{equation*}
\dot{\boldsymbol{x}}^{i}(s) = \dot{\boldsymbol{X}}^i + s [R^hR^i\boldsymbol{e}_1]^{\times}\boldsymbol{\Omega}^h + s R^h[R^i\boldsymbol{e}_1]^{\times}\boldsymbol{\Omega}^i.
\end{equation*}

\begin{figure}[t]
    \centering
    \includegraphics[width=1\linewidth]{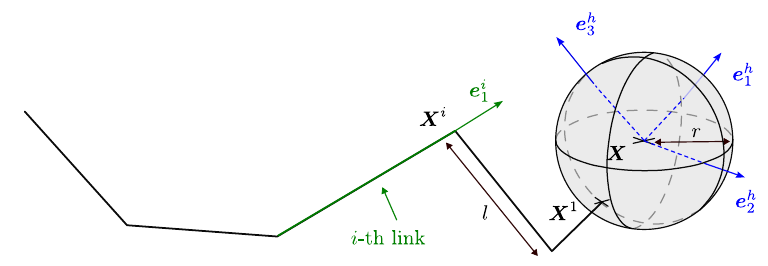}
    \caption{3D $N$-link model of the swimmer. The head frame is $\mathscr{R}^h = (\boldsymbol{X}, \boldsymbol{e}_1^h, \boldsymbol{e}_2^h, \boldsymbol{e}_3^h)$; each link $i$ is oriented along $\boldsymbol{e}_1^i$ of length $l$.}
    \label{fig:nlinks_illu}
\end{figure}

\subsection{Swimmer dynamics}
At low Reynolds number, inertia is negligible, and total force and torque balance yield
\begin{equation*}
\begin{cases}
\boldsymbol{F}^{\text{hydro}}_{\text{head}} + \displaystyle\sum_{i=1}^N \boldsymbol{F}^{\text{hydro}}_i = \boldsymbol{F}^{\text{ext}},\\[4pt]
\boldsymbol{T}^{\text{hydro}}_{\text{head}} + \displaystyle\sum_{i=1}^N \boldsymbol{T}^{\text{hydro}}_{i,\boldsymbol{X}} = \boldsymbol{T}^{\text{ext}}_{\text{head}},\\[4pt]
\displaystyle\sum_{i=j}^N \boldsymbol{T}^{\text{hydro}}_{i,\boldsymbol{X}^j} = \boldsymbol{T}_j^{\text{ext}}, \quad j=1,\ldots,N.
\end{cases}
\end{equation*}

\noindent\textbf{Hydrodynamic forces:}
The head experiences a viscous drag
\begin{equation*}
\boldsymbol{F}^{\text{hydro}}_{\text{head}} = -rR^h D^h (R^h)^\top \dot{\boldsymbol{X}}, \qquad
\boldsymbol{T}^{\text{hydro}}_{\text{head}} = -r^3k_r\boldsymbol{\Omega}^h,
\end{equation*}
with $D^h = \mathrm{diag}(k^h_\parallel, k^h_\perp, k^h_\perp)$.  
For each link, the local viscous force density from RFT is
\begin{equation*}
\boldsymbol{f}_i^{\text{hydro}}(s) = -R^h \tilde{D}^i (R^h)^\top \dot{\boldsymbol{x}}^i(s),
\end{equation*}
where $\tilde{D}^i = R^i D^i (R^i)^\top$, $D^i=\mathrm{diag}(k^i_\parallel, k^i_\perp, k^i_\perp)$. Integrating along the link gives $\boldsymbol{F}_i^{\text{hydro}}$ and $\boldsymbol{T}^{\text{hydro}}_{i,\boldsymbol{x}^0}$. In the following, we assume identical coefficients for all links.

\noindent\textbf{External torques:}
Elastic torques between links follow discrete beam theory,
\begin{equation*}
\boldsymbol{T}^{\text{el}}_i = k_{\text{el}} (\boldsymbol{e}_1^i \times \boldsymbol{e}_1^{i-1}),
\end{equation*}
and the magnetic actuation torque is
\begin{equation*}
\boldsymbol{T}^{\text{mag}} = \boldsymbol{M} \times \boldsymbol{B}, \qquad \boldsymbol{M}=m\boldsymbol{e}_1^h, \quad \boldsymbol{B}(t)=[u_1,u_2,u_3]^\top.
\end{equation*}
Thus, $\boldsymbol{F}^{\text{ext}}=0$, $\boldsymbol{T}^{\text{ext}}_{\text{head}}=-\boldsymbol{T}^{\text{mag}}$, and $\boldsymbol{T}^{\text{ext}}_j=-\boldsymbol{T}^{\text{el}}_j$.

\noindent\textbf{Final form:}
Neglecting rotation about $\boldsymbol{e}_1^i$, only the components orthogonal to this axis are retained. The full dynamic system involves the state
\begin{equation*}
\boldsymbol{p}=(\boldsymbol{X},\boldsymbol{\Theta},\boldsymbol{\Phi})^\top,\quad
\boldsymbol{B}=(u_1,u_2,u_3),
\end{equation*}
with $\boldsymbol{X}\in\mathbb{R}^3$, $\boldsymbol{\Theta}\in[0,2\pi]^3$, and $\boldsymbol{\Phi}\in[0,2\pi]^{2N}$.  We now restrict our analysis to the planar motion of the swimmer. Without loss of generality, we set $z = 0$ and $u_3 \equiv 0$. The study of controllability for the full three-control system is considerably more intricate and remains an open research problem. Under the planar assumption, only the coordinates associated with the variables $x$, $y$, $\theta_z$, and $\phi_z^i$ are considered. Consequently, we have
\begin{equation*}
z = \theta_x = \theta_y = \phi_y^i = u_3 = 0.
\end{equation*} The system can be compactly written as
\begin{equation}\label{eq:noninvertsyst}
    (AQB)\, \dot{\boldsymbol{p}} = \boldsymbol{G}_0 + u_1 \boldsymbol{G}_1 + u_2 \boldsymbol{G}_2.
\end{equation}
Finally, we obtain
\begin{equation}\label{eq:systplan}
\dot{\boldsymbol{p}} = 
\boldsymbol{F}_0(\boldsymbol{p})
+ u_1\, \boldsymbol{F}_1(\boldsymbol{p})
+ u_2\, \boldsymbol{F}_2(\boldsymbol{p}),
\end{equation}
where the vector fields $\boldsymbol{F}_i$ are obtained by inverting the matrix $AQB$. Due to the complexity of the determinant of $AQB$ (see code in the Supplemental Material), we assume in the following that the parameters are chosen such that the determinant never vanishes.

\section{Theoretical Framework}\label{sec:2}
In this section, we present the general context for two control-affine systems, along with the assumptions required for this work. Notations and definitions are introduced, and relevant theorems from \cite{moreau2024}, which are used to prove our main results, are recalled.

\subsection{Preliminaries}

Let
\begin{equation}\label{eq:syst}
    \dot{\boldsymbol{p}} = \boldsymbol{F}_0(\boldsymbol{p}) + u_1 \boldsymbol{F}_1(\boldsymbol{p}) + u_2 \boldsymbol{F}_2(\boldsymbol{p}),
\end{equation}
be a general control-affine system where the vector fields $\boldsymbol{F}_0,\boldsymbol{F}_1,\boldsymbol{F}_2$  are real-analytic on $\mathbb{R}^n$ (or in a neighborhood of an equilibrium point) and $u_1,u_2$ are control functions in $L^{\infty}([0,T];\,\mathbb{R}^m)$ for some $T>0$, with $(n,m)\in\mathbb{N}^2$. Denote by $\boldsymbol{p}^{\text{eq}}$ an equilibrium state. We consider special assumptions about the control-affine system \eqref{eq:syst}. First of all, the vector fields $\boldsymbol{F}_0$ and $\boldsymbol{F}_1$ vanish at the equilibrium while $\boldsymbol{F}_2$ does not.
\begin{assumption}\label{assumption1} 
The vector fields satisfy
\begin{equation*}
\boldsymbol{F}_0(\boldsymbol{p}^{\mathrm{eq}})=0_{\mathbb{R}^n},\quad 
\boldsymbol{F}_1(\boldsymbol{p}^{\mathrm{eq}})=0_{\mathbb{R}^n},\quad
\boldsymbol{F}_2(\boldsymbol{p}^{\mathrm{eq}})\neq0_{\mathbb{R}^n}.
\end{equation*}
\end{assumption}
The second assumption considered is on the gradient of the first component of the vector fields $\boldsymbol{F}_0$ and $\boldsymbol{F}_1$.
\begin{assumption}\label{assumption2}At the equilibrium $\boldsymbol{p}^{\mathrm{eq}}$, the gradients of the first components of $\boldsymbol{F}_0$ and $\boldsymbol{F}_1$ are aligned with the first coordinate axis, i.e.,
\begin{equation*}
\nabla F_0^{(1)}(\boldsymbol{p}^{\mathrm{eq}}),\; \nabla F_1^{(1)}(\boldsymbol{p}^{\mathrm{eq}}) \in \mathrm{Span}(\boldsymbol{e}_1).
\end{equation*}
\end{assumption}

\begin{assumption}\label{assumption3}
At the equilibrium $\boldsymbol{p}^{\mathrm{eq}}$, the first component of $\boldsymbol{F}_2$ vanishes, i.e.,
\begin{equation*}
\boldsymbol{F}_2(\boldsymbol{p}^{\mathrm{eq}})=
\begin{bmatrix}
0 & a_2 & \ldots & a_n
\end{bmatrix}^\top,
\end{equation*}
where $a_i\in\mathbb{R}$ for all  $i\in\{2,\dots,n\}$.
\end{assumption}

\subsection{Notations}
Let $\boldsymbol{f}$ and $\boldsymbol{g}$ be real analytic vector fields. Their Lie bracket is denoted by $[\boldsymbol{f},\boldsymbol{g}]$ and given by 
\begin{equation*}
    [\boldsymbol{f},\boldsymbol{g}]:= (\boldsymbol{f}\cdot\nabla)\boldsymbol{g}-(\boldsymbol{g}\cdot\nabla)\boldsymbol{f}.
\end{equation*}
We will use some condensed notations (as in \cite{moreau2024}) for some Lie brackets. Let a family of real analytic vector fields $(\boldsymbol{F}_i)_{i=0}^{2}$, we define:
\begin{align*}
    \boldsymbol{F}_{ij} &:= [\boldsymbol{F}_i, \boldsymbol{F}_j],\\
    \boldsymbol{F}_{ijk}&:=\left[\boldsymbol{F}_i, [\boldsymbol{F}_j, \boldsymbol{F}_j]\right],\\
    \boldsymbol{F}_{ij,kl}&:=\left[[\boldsymbol{F}_i, \boldsymbol{F}_j], [\boldsymbol{F}_k, \boldsymbol{F}_l\right],\\
    \boldsymbol{F}_{ij, klm} &:= \left[[\boldsymbol{F}_{i},\boldsymbol{F}_j],[\boldsymbol{F}_k,[\boldsymbol{F}_l, \boldsymbol{F}_m]]\right].
\end{align*}

\subsection{Definitions}
For $\delta>0$, and $\boldsymbol{p}\in\mathbb{R}^n$, let $B(\boldsymbol{p}, \delta)$ be the open ball centered at $\boldsymbol{p}$ with radius $\delta$. 

\begin{definition}[\cite{beauchard2018}, $W^{k,\infty}$-STLC] The control system \eqref{eq:syst} is $W^{k,\infty}$-STLC at the equilibrium $(\boldsymbol{p}^{\text{eq}}, \boldsymbol{u}^{\text{eq}})$ if, for every $\varepsilon>0$, $\varepsilon'>0$, there exists $\delta>0$ such that, for every $\boldsymbol{p}_0$, $\boldsymbol{p}_1$ in $B(\boldsymbol{p}^{\text{eq}}, \delta)$, there exists a control $\boldsymbol{u}(\cdot)$ in $W^{k,\infty}([0,\varepsilon], \mathbb{R}^m)$ such that the solution $\boldsymbol{p}(\cdot):[0,\varepsilon]\to \mathbb{R}^n$ of the control system \eqref{eq:syst} with initial condition $\boldsymbol{p}(0)=\boldsymbol{p}_0$ satisfies $\boldsymbol{p}(\varepsilon)=\boldsymbol{p}_1$ and 
\begin{equation*}
    \|\boldsymbol{u}-\boldsymbol{u}^{\text{eq}}\|_{W^{k,\infty}([0,\varepsilon];\,\mathbb{R}^m)}\leq \varepsilon'
\end{equation*}
\end{definition}
\medskip 
Sometimes, we refer to \textit{STLC} for the case $k = 0$, since $W^{0,\infty}$-STLC coincides with the standard notion of STLC (i.e., $L^{\infty}$-STLC).
\medskip
\begin{definition}[\cite{moreau2024}, B-STLC]
The control system \eqref{eq:syst} is B-STLC at the equilibrium $(\boldsymbol{p}^{\text{eq}}, \boldsymbol{u}^{\text{eq}})$ if there exists $\alpha>0$ such that, for every $\varepsilon>0$, there exists $\delta>0$ such that, for every $\boldsymbol{p}_0$, $\boldsymbol{p}_1$ in $B(\boldsymbol{p}^{\text{eq}}, \delta)$, there exists a control $\boldsymbol{u}(\cdot)$ in $L^{\infty}([0,\varepsilon],\, \mathbb{R}^m)$ such that the solution $\boldsymbol{p}(\cdot):[0,\varepsilon]\to \mathbb{R}^n$ of the control system \eqref{eq:syst} with initial condition $\boldsymbol{p}(0)=\boldsymbol{p}_0$ satisfies $\boldsymbol{p}(\varepsilon)=\boldsymbol{p}_1$ and 
\begin{equation*}
    \|\boldsymbol{u}-\boldsymbol{u}^{\text{eq}}\|_{L^{\infty}([0,\varepsilon];\,\mathbb{R}^m)}\leq \alpha
\end{equation*}
\end{definition}

\subsection{Theorems}
Given the model system of the swimmer under consideration \eqref{eq:systplan}, the vector fields 
$\boldsymbol{F}_1$ and $\boldsymbol{F}_2$ (as well as the equilibrium controls 
$u_1^{\text{eq}}$ and $u_2^{\text{eq}}$) in \cite{moreau2024} must be inverted. 
We now assume that the equilibrium state is located at the origin. 
Let $\mathcal{R}_1$ denote the set of all iterated Lie brackets of 
$\boldsymbol{F}_0$ and $\boldsymbol{F}_2$ in which $\boldsymbol{F}_2$ appears at most once, 
and let $R_1$ be the subspace of $\mathbb{R}^n$ spanned by the values at $0$ 
of the elements of $\mathcal{R}_1$.
\begin{theorem}[\cite{moreau2024}, Theorem 3.2]\label{theorem32}
Consider the system \eqref{eq:syst} under \Cref{assumption1}. Assume $\boldsymbol{F}_{202}(0)\notin R_1$.
\begin{enumerate}
    \item If $\boldsymbol{F}_{202}(0)\in R_1 + \mathrm{Span}(\boldsymbol{F}_{212}(0))$, let $\beta\in\mathbb{R}$ be such that 
    \begin{equation*}
        \boldsymbol{F}_{202}(0)+\beta \boldsymbol{F}_{212}(0)\in R_1.
    \end{equation*}
    Then, for any $u_1^{\text{eq}}$ such that $u_2^{\text{eq}}\neq\beta$, system \eqref{eq:syst} is not STLC at $(0, ( u_1^{\text{eq}},0))$.
    \item If $\boldsymbol{F}_{202}(0)\notin R_1 + \mathrm{Span}(\boldsymbol{F}_{212}(0))$, then, for any $u_1^{\text{eq}} \in \mathbb{R}$, system \eqref{eq:syst} is not B-STLC at $(0,(u_1^{\text{eq}}, 0)).$
\end{enumerate}
\end{theorem}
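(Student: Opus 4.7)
The plan is to apply a Sussmann-type unilateral obstruction argument, adapted to track how the mixed bracket $\boldsymbol{F}_{212}$ can partially cancel the ``bad'' bracket $\boldsymbol{F}_{202}$ depending on the drift shift induced by $u_1^{\text{eq}}$. Recall that $\boldsymbol{F}_{202}=[\boldsymbol{F}_2,[\boldsymbol{F}_0,\boldsymbol{F}_2]]$ has $\boldsymbol{F}_0$ appearing once and $\boldsymbol{F}_2$ twice: in Sussmann's parity classification this makes it \emph{bad}, because its contribution to the reachable set is essentially proportional to the nonnegative quantity $\int_0^T u_2(t)^2\,dt$ and therefore produces a one-sided displacement, unlike ``good'' brackets whose iterated-integral signature can be driven to either sign.

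The first step is to dualize the obstruction. Since $\boldsymbol{F}_{202}(0)\notin R_1$, pick a linear form $\ell\in(\mathbb{R}^n)^\ast$ that vanishes on $R_1$ and satisfies $\ell(\boldsymbol{F}_{202}(0))\neq 0$; without loss of generality take this value positive. I would then expand the endpoint map as a Chen--Fliess / Magnus series in the shifted controls $v_1:=u_1-u_1^{\text{eq}}$ and $v_2:=u_2$. Every iterated bracket whose value at $0$ lies in $R_1$ is annihilated by $\ell$, and pure $\boldsymbol{F}_0,\boldsymbol{F}_1$ words vanish since $\boldsymbol{F}_0(0)=\boldsymbol{F}_1(0)=0$ by Assumption~1. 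The first surviving contribution to $\ell(\boldsymbol{p}(T))$ has the form
\begin{equation*}
\bigl(\ell(\boldsymbol{F}_{202}(0))+u_1^{\text{eq}}\,\ell(\boldsymbol{F}_{212}(0))\bigr)\int_0^T u_2^2\,dt \;+\; \ell(\boldsymbol{F}_{212}(0))\int_0^T v_1(t)\,u_2(t)^2\,dt \;+\; \mathcal{R},
\end{equation*}
where $\mathcal{R}$ collects the iterated integrals coming from all higher-complexity brackets.

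The two cases then follow by inspection. In case (1), the hypothesis $\boldsymbol{F}_{202}(0)+\beta\boldsymbol{F}_{212}(0)\in R_1$ forces $\ell(\boldsymbol{F}_{202}(0))=-\beta\,\ell(\boldsymbol{F}_{212}(0))$, so the leading coefficient rewrites as $(u_1^{\text{eq}}-\beta)\,\ell(\boldsymbol{F}_{212}(0))$. Under the assumption $u_1^{\text{eq}}\neq\beta$ (the natural reading of the condition in the statement), this coefficient is nonzero and pins the sign of $\ell(\boldsymbol{p}(T))$, while the cross term is smaller by a factor $\|v_1\|_\infty\to 0$ in the STLC limit; hence states on the opposite side of $\{\ell=0\}$ are unreachable. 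In case (2), the stronger hypothesis lets me additionally choose $\ell$ to annihilate $\boldsymbol{F}_{212}(0)$ while keeping $\ell(\boldsymbol{F}_{202}(0))\neq 0$; the cross term then vanishes identically, and the one-sided term $\ell(\boldsymbol{F}_{202}(0))\int u_2^2\,dt$ persists even for perturbations bounded only in $L^\infty$, ruling out B-STLC.

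The main obstacle is controlling the remainder $\mathcal{R}$: one needs Beauchard--Marbach style interpolation inequalities (see \cite{beauchard2018}) that bound every iterated integral associated with a higher-weighted bracket by the leading quantity $\int_0^T u_2^2\,dt$ in the B-STLC setting, or by a power of $\|v\|_\infty$ in the STLC setting where the controls themselves are small. These estimates rely on Sussmann's weighted degree and on the analyticity of the vector fields, and they are what makes this refined statement more than a consequence of a first-order reachability argument.
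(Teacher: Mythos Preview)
This theorem is not proved in the paper: it is quoted verbatim as Theorem~3.2 of \cite{moreau2024} and used as a black box in the proof of \Cref{theorem:nonbstlc}. There is therefore no ``paper's own proof'' to compare your proposal against. The paper's only contribution around this statement is the remark that the roles of $\boldsymbol{F}_1,\boldsymbol{F}_2$ (and of $u_1^{\text{eq}},u_2^{\text{eq}}$) are swapped relative to \cite{moreau2024}; you correctly spotted that the printed condition ``$u_2^{\text{eq}}\neq\beta$'' is an artifact of that swap and should read $u_1^{\text{eq}}\neq\beta$.

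That said, your sketch is a faithful outline of how results of this type are established in \cite{beauchard2018,moreau2024}: shift the drift to $\boldsymbol{F}_0+u_1^{\text{eq}}\boldsymbol{F}_1$, identify $[\boldsymbol{F}_2,[\boldsymbol{F}_0+u_1^{\text{eq}}\boldsymbol{F}_1,\boldsymbol{F}_2]](0)=\boldsymbol{F}_{202}(0)+u_1^{\text{eq}}\boldsymbol{F}_{212}(0)$ as the lowest surviving bracket modulo $R_1$, pair against a linear form $\ell$ annihilating $R_1$, and absorb the higher-order remainder. The honest acknowledgment that bounding $\mathcal{R}$ is the real work is exactly right; the weighted-degree interpolation inequalities you allude to are precisely the machinery developed in \cite{beauchard2018} and refined in \cite{moreau2024}, and they are nontrivial enough that your sketch should be read as an outline rather than a self-contained proof. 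One small caution: in case~(1) you also need $R_1$ itself to be controllable (so that the $\ell$-negative half-space is genuinely unreachable rather than merely not reached at leading order); this is part of the argument in \cite{moreau2024} and is implicit in how $R_1$ is defined via iterated $\boldsymbol{F}_0$-brackets of $\boldsymbol{F}_2$.
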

If we are not in the case covered by \Cref{theorem32}, we must consider Lie brackets of higher order. For a given value of the equilibrium control $u_1^{\text{eq}}$, we define the mapping 
$D_{u_1^{\text{eq}}}: \mathbb{R}^2 \to \mathbb{R}^n$ as follows:
\begin{multline}\label{eq:Deqlamb}
    D_{u_1^{\text{eq}}}(\lambda_1, \lambda_2)=\lambda_1^2\left(\boldsymbol{F}_{02,002}(0)-u_1^{\text{eq}}\boldsymbol{F}_{02,102}(0)\right)\\+\lambda_2^2\left(\boldsymbol{F}_{12,012}(0)-u_1^{\text{eq}}\boldsymbol{F}_{12,112}(0)\right)\\ - \lambda_1\lambda_2 \Big(\boldsymbol{F}_{12,002}(0)+\boldsymbol{F}_{02,012}(0)\\-u_1^{\text{eq}}(\boldsymbol{F}_{12,102}(0)+\boldsymbol{F}_{02,112}(0))\Big).
\end{multline}
We define the condition $C(Q)$ such that 
\begin{equation}\label{eq:CQ}
C(Q) \Leftrightarrow 
\left\{
\begin{aligned}
&~~\text{there exists a linear form } 
\varphi: \mathbb{R}^n \to \mathbb{R}, 
\text{ whose}\\
&~ \text{restriction to } Q \text{ is zero, and such that the} \\
&~\text{quadratic form } 
(\lambda_1,\lambda_2) \mapsto 
\left\langle \varphi, D_{u_1^{\text{eq}}}(\lambda_1,\lambda_2) \right\rangle\\ 
&~\text{ is positive definite.}
\end{aligned}
\right.
\end{equation}
Let $R'$ denote the subspace of $\mathbb{R}^n$ such that 
\begin{equation*}
    R'=\mathrm{Span}\left(\boldsymbol{F}_{02,102}(0), \boldsymbol{F}_{12,112}(0), \boldsymbol{F}_{12,102}(0), \boldsymbol{F}_{02,112}(0)\right).
\end{equation*}
\begin{theorem}[\cite{moreau2024}, Theorem 3.8]\label{theorem38}
Consider the system \eqref{eq:syst} under \Cref{assumption1}. Assume $\boldsymbol{F}_{202}(0)\in R_1$, $\boldsymbol{F}_{212}(0)\in R_1$ and $\boldsymbol{F}_{12,02}(0)\in R_1$, and let $u_1^{\text{eq}}\in\mathbb{R}$. Then, with condition $C(Q)$ defined in \eqref{eq:CQ}, we have, at $(0,(u_1^\text{eq},0))$:
\begin{enumerate}
    \item If $C(R_1)$ holds, system \eqref{eq:syst} is not $W^{1,\infty}$-STLC.
    \item If $C(R_1+R')$ holds, system \eqref{eq:syst} is not $(W^{1,\infty},B)$-STLC.
\end{enumerate}
\end{theorem}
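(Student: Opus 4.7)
The plan is to establish this non-controllability result through a controlled asymptotic expansion of the end-point map in powers of the perturbation $\delta\boldsymbol{u} = \boldsymbol{u} - \boldsymbol{u}^{\text{eq}}$ and then exhibit an explicit directional obstruction. First, I would move to the interaction picture with respect to the drift $\boldsymbol{F}_0 + u_1^{\text{eq}} \boldsymbol{F}_1$, so that only $\delta u_1$ and $\delta u_2$ drive the dynamics, and represent the solution at time $\varepsilon$ via a chronological (Chen--Fliess) series whose coefficients are iterated integrals of the controls and whose state-space terms are iterated Lie brackets of $\boldsymbol{F}_0, \boldsymbol{F}_1, \boldsymbol{F}_2$ evaluated at $\boldsymbol{p}^{\text{eq}}=0$.

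The three hypotheses $\boldsymbol{F}_{202}(0), \boldsymbol{F}_{212}(0), \boldsymbol{F}_{12,02}(0)\in R_1$ are precisely those which annihilate, modulo $R_1$, the third-order obstructions exploited in \Cref{theorem32} and thus force the analysis to fourth order. After reducing modulo $R_1$, the leading surviving contribution to $\boldsymbol{p}(\varepsilon)$ projects onto the quadratic form $D_{u_1^{\text{eq}}}(\lambda_1,\lambda_2)$ defined in \eqref{eq:Deqlamb}, where $\lambda_1$ and $\lambda_2$ are explicit linear functionals of the controls (morally, primitives of $\delta u_2$ possibly modulated by $\delta u_1$). Exhibiting this clean factorization is the technical heart of the argument and relies on repeated integration by parts in the iterated integrals, which trades time derivatives against brackets with $\boldsymbol{F}_0$ or $\boldsymbol{F}_1$.

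Granted this expansion, the obstruction is immediate: by hypothesis $C(Q)$ there is a linear form $\varphi$ vanishing on $Q$ such that $(\lambda_1,\lambda_2)\mapsto \langle \varphi, D_{u_1^{\text{eq}}}(\lambda_1,\lambda_2)\rangle$ is positive definite. Applying $\varphi$ kills every term whose bracket lies in $Q$, and yields an estimate of the form
\begin{equation*}
\langle \varphi, \boldsymbol{p}(\varepsilon)\rangle \;\geq\; c\bigl(\lambda_1^2+\lambda_2^2\bigr) \;-\; \mathcal{E}(\delta\boldsymbol{u}),
\end{equation*}
with $\mathcal{E}$ a remainder assembled from higher-order brackets. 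Any target $\boldsymbol{p}_1$ lying arbitrarily close to $0$ in the open half-space $\{\langle \varphi, \cdot\rangle < 0\}$ is then unreachable, which is exactly the claimed non-controllability.

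The core difficulty lies in controlling $\mathcal{E}$, and this is precisely where the split between the two parts of the statement appears. For part (1), the $W^{1,\infty}$-smallness of $\delta\boldsymbol{u}$ forces $\|\delta u_i'\|_\infty\to 0$ along any candidate steering family, so integration by parts absorbs all brackets of length $\geq 6$ into $o(|\lambda|^2)$ and taking $Q=R_1$ suffices. For part (2), where the control is only bounded (not small) and its derivative is not controlled, certain length-five brackets containing $\boldsymbol{F}_1$, namely the generators of $R'$, contribute at the same order as $D_{u_1^{\text{eq}}}$; adjoining them to $Q$ via $C(R_1+R')$ annihilates their contribution under $\varphi$. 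The main obstacle I expect is therefore the production of uniform subquadratic remainder estimates valid over the whole bounded ball of controls, which typically calls for an interpolation between the truncated chronological series and a Gronwall-type tail bound on the neglected terms.
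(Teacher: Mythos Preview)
This theorem is not proved in the paper at all: it is quoted verbatim from \cite{moreau2024} as a black-box tool, alongside \Cref{theorem32}, and then applied in the proof of \Cref{theorem:nonbstlc}. The paper's own contribution is to verify, for the specific $2$-link swimmer dynamics \eqref{eq:systplan}, the hypotheses of \Cref{theorem32} and \Cref{theorem38} (via \Cref{prop1} and symbolic computation of the relevant brackets), not to establish these abstract obstruction criteria themselves. So there is no ``paper's own proof'' of this statement to compare against.

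That said, your sketch is a reasonable outline of how such results are actually proved in \cite{moreau2024} and the related literature \cite{beauchard2018}: one expands the end-point map in a Chen--Fliess series, identifies the leading quadratic drift term modulo the ``good'' subspace $R_1$, and uses the positive-definite linear form $\varphi$ to produce a one-sided estimate that rules out reaching a half-space. Your identification of the role of $R'$ in part~(2) --- absorbing the length-five brackets with extra $\boldsymbol{F}_1$ factors that survive when only boundedness (not smallness) of the control is assumed --- is also on target. The genuine technical work you correctly flag, namely the uniform remainder estimates, is carried out in \cite{moreau2024} and is not reproduced here.
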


\section{Main Result}\label{sec:3}
We present the main result of this paper, which concerns the local uncontrollability in small time of the magnetic-head $2$-link swimmer. A related case was previously studied in \cite{moreau2019}, where a swimmer with $N$ magnetic links in two dimensions was shown to be controllable only under very specific conditions. More precisely, when $\boldsymbol{F}_{202}$ does not vanish at equilibrium, the system is not B-STLC. As indicated in \cite{moreau2024}, this arises from obstructions due to third-order Lie brackets. If $\boldsymbol{F}_{202}$ vanishes at equilibrium, the system is not ($W^{1,\infty}$, B)-STLC.\footnote{This result is weaker than the previous one. To obtain a stronger result, one would need to consider the space $R''$ defined in \cite{moreau2024}, which is generally too complex to handle in practice.} In this case, the obstruction comes from fifth-order Lie brackets.  Therefore, when only the head is magnetized, the swimmer loses the possibility of being locally controllable in small time, in contrast to the case studied in \cite{moreau2019}.\\

\noindent Without loss of generality, we take the equilibrium state at the origin. Indeed, the problem is invariant under translation due to the absence of the variables $x$ and $y$ in the dynamics. Moreover, it is also invariant under rotation, since a simultaneous rotation of the system and the control inputs leaves the dynamics unchanged. The following computations were carried out using the symbolic computation software \texttt{Wolfram}. We obtain that the Lie bracket $\boldsymbol{F}_{202}(0)$ can be expressed as

\begin{equation}\label{eq:F202}
    \boldsymbol{F}_{202}(0) = \alpha \boldsymbol{e}_1,
\end{equation}
where $\alpha$ is a constant (given in \eqref{eq:alpha}) depending on $r$, $l$, and the coefficients associated with the hydrodynamic forces.
\begin{theorem}\label{theorem:nonbstlc}
Consider the system \eqref{eq:systplan} for a $2$-link swimmer.
\begin{enumerate}
    \item If $\alpha \neq 0$, then the $2$-link swimmer with magnetic head is not B-STLC at the equilibrium.
    \item If $\alpha = 0$, then the $2$-link swimmer with magnetic head is not ($W^{1,\infty}$, B)-STLC.
\end{enumerate}
\end{theorem}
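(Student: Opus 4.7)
The approach is to apply the two theorems recalled above from \cite{moreau2024}: \Cref{theorem32} for part (1) and \Cref{theorem38} for part (2). Before either invocation, I would check \Cref{assumption1} at the origin for the planar $2$-link system \eqref{eq:systplan}: the drift $\boldsymbol{F}_0$ vanishes because all RFT drag, elastic torques and magnetic torques are zero in the straight configuration at rest; $\boldsymbol{F}_1$ vanishes because the field $u_1\boldsymbol{e}_1$ is collinear with the head magnetization $\boldsymbol{M}=m\boldsymbol{e}_1^h$ when the head is aligned with $\boldsymbol{e}_1$, producing no torque; and $\boldsymbol{F}_2(0)\neq 0$ because $u_2\boldsymbol{e}_2$ exerts a nonzero transverse torque on the head, which propagates through the inverse of $AQB$ to a nonzero state velocity.

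For part (1), assuming $\alpha\neq 0$, the plan is to apply \Cref{theorem32}(2). Using \texttt{Wolfram} to invert $AQB$ symbolically and to evaluate Lie brackets at the origin, I would: (i) recompute $\boldsymbol{F}_{202}(0)=\alpha\boldsymbol{e}_1$ as in \eqref{eq:F202}; (ii) enumerate iterated brackets of $\boldsymbol{F}_0$ containing at most one factor $\boldsymbol{F}_2$, stopping once the span they generate at the origin stabilizes (a finite-rank test on the symbolic vectors), to obtain a generating family of $R_1$; and (iii) observe that each of these generators, as well as $\boldsymbol{F}_{212}(0)$, has vanishing first component, so $R_1+\mathrm{Span}(\boldsymbol{F}_{212}(0))\subset\{\boldsymbol{e}_1\}^\perp$. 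Since $\alpha\neq 0$, this yields $\boldsymbol{F}_{202}(0)\notin R_1+\mathrm{Span}(\boldsymbol{F}_{212}(0))$, and \Cref{theorem32}(2) delivers non-B-STLC at $(0,(u_1^{\text{eq}},0))$ for every $u_1^{\text{eq}}\in\mathbb{R}$.

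For part (2), where $\alpha=0$ so that $\boldsymbol{F}_{202}(0)=0\in R_1$, I would apply \Cref{theorem38}(2). The inclusions $\boldsymbol{F}_{212}(0)\in R_1$ and $\boldsymbol{F}_{12,02}(0)\in R_1$ follow from the same first-component argument used in part (1). The decisive step is then to verify condition $C(R_1+R')$: I would compute symbolically the eight fifth-order brackets entering $D_{u_1^{\text{eq}}}$ and $R'$, check that $R_1+R'\subset\{\boldsymbol{e}_1\}^\perp$, and take the linear form $\varphi=\pm\boldsymbol{e}_1^\top$, which then automatically annihilates $R_1+R'$. Condition $C(R_1+R')$ thus reduces to positive definiteness of the scalar quadratic form $(\lambda_1,\lambda_2)\mapsto\langle\boldsymbol{e}_1,\, D_{u_1^{\text{eq}}}(\lambda_1,\lambda_2)\rangle$, whose coefficients are explicit rational functions of the physical parameters and of $u_1^{\text{eq}}$; signing $\varphi$ fixes the diagonal term, and what remains is a discriminant inequality in $u_1^{\text{eq}}$.

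The main obstacle is twofold. Computationally, inverting $AQB$ and expanding the fifth-order Lie brackets produces very large symbolic expressions, which is why the algebra is delegated to \texttt{Wolfram}. Conceptually, the pivotal observation is the hyperplane invariance $R_1,\, R_1+R'\subset\{\boldsymbol{e}_1\}^\perp$, which collapses every membership check into a single first-component inspection and ultimately reflects the axial symmetry of the equilibrium configuration: no iterated bracket of $\boldsymbol{F}_0,\boldsymbol{F}_1$ with at most one $\boldsymbol{F}_2$ can produce an instantaneous axial displacement of a swimmer aligned with $\boldsymbol{e}_1$ at rest. Verifying the resulting scalar positive definiteness in part (2) uniformly in $u_1^{\text{eq}}$ is the remaining nontrivial point.
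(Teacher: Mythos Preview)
Your plan for part~(1) is essentially the paper's: verify \Cref{assumption1}, establish $R_1\subset\{\boldsymbol{e}_1\}^\perp$, note $\boldsymbol{F}_{202}(0)=\alpha\boldsymbol{e}_1\notin\{\boldsymbol{e}_1\}^\perp$ and that $\boldsymbol{F}_{212}(0)$ has vanishing first component, and invoke \Cref{theorem32}(2). The paper differs only in \emph{how} the hyperplane inclusion is obtained: rather than enumerating brackets until the span stabilizes, it checks the structural \Cref{assumption2} and \Cref{assumption3} once (symbolically) and then derives $R_1\subset\mathrm{Span}(\boldsymbol{e}_2,\ldots,\boldsymbol{e}_5)$ from a short inductive argument (\Cref{prop1}). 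Both routes work here; the paper's buys a clean proof that is manifestly independent of how many brackets one happens to compute.

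There is, however, a genuine logical gap in your part~(2). To invoke \Cref{theorem38} you need $\boldsymbol{F}_{212}(0)\in R_1$ and $\boldsymbol{F}_{12,02}(0)\in R_1$. Your ``first-component argument'' from part~(1) only shows $R_1\subset\{\boldsymbol{e}_1\}^\perp$ and that those two brackets lie in $\{\boldsymbol{e}_1\}^\perp$; it does \emph{not} show the reverse inclusion $\{\boldsymbol{e}_1\}^\perp\subset R_1$, and you never claim or verify that $R_1$ is the full hyperplane. The paper closes this by a stronger symbolic fact you do not anticipate: when $\alpha=0$ one has $\boldsymbol{F}_{202}(0)=\boldsymbol{F}_{212}(0)=\boldsymbol{F}_{12,02}(0)=0$, so membership in $R_1$ is trivial. (Note also that $\boldsymbol{F}_{212}$ and $\boldsymbol{F}_{12,02}$ each contain $\boldsymbol{F}_2$ twice, so the structural first-component lemma, which handles brackets with at most one $\boldsymbol{F}_2$, does not apply to them anyway.)

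A related miscalibration: you expect a nontrivial $2\times 2$ quadratic form, a discriminant inequality in $u_1^{\text{eq}}$, and a check that $R'\subset\{\boldsymbol{e}_1\}^\perp$. In the paper's computation all four generators of $R'$ vanish, and of the eight fifth-order brackets in $D_{u_1^{\text{eq}}}$ only $\boldsymbol{F}_{02,002}(0)=\gamma\boldsymbol{e}_1$ survives, so $D_{u_1^{\text{eq}}}(\lambda_1,\lambda_2)=\gamma\lambda_1^2\boldsymbol{e}_1$ is independent of $u_1^{\text{eq}}$ and the check with $\varphi=\gamma\boldsymbol{e}_1^{*}$ is immediate. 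Your anticipated ``remaining nontrivial point'' thus disappears, but only after the exact vanishing computations you were missing.
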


\section{Proof}\label{sec:4}
The proof of \Cref{theorem:nonbstlc} is constructed in two steps. First, under the given assumptions, we deduce that the space $R_1$ is contained in $\mathrm{Span}(\boldsymbol{e}_2, \ldots, \boldsymbol{e}_n)$, as stated in \Cref{prop1}. Then, using the symbolic computation software \texttt{Wolfram}, we verify the required conditions and apply \Cref{theorem32} and \Cref{theorem38} to establish the result for the $2$-link swimmer.

\subsection{\texorpdfstring{Property of the $R_1$ Space}{Property of the R1 Space}}

\begin{lemma}\label{lemma1}
 Let us consider the system \eqref{eq:syst} under the \Cref{assumption1}. Then, any iterated Lie brackets that doesn't contain $\boldsymbol{F}_2$ vanished at the equilibrium. 
 \end{lemma}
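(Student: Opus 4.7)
The plan is to proceed by induction on the depth of the iterated Lie bracket, relying on one simple pointwise observation. By the coordinate expression already introduced in the paper, $[\boldsymbol{f}, \boldsymbol{g}](\boldsymbol{p}) = (\boldsymbol{f}(\boldsymbol{p}) \cdot \nabla)\boldsymbol{g}(\boldsymbol{p}) - (\boldsymbol{g}(\boldsymbol{p}) \cdot \nabla)\boldsymbol{f}(\boldsymbol{p})$, so the value of $[\boldsymbol{f}, \boldsymbol{g}]$ at a point $\boldsymbol{p}$ depends \emph{linearly} on the pointwise values $\boldsymbol{f}(\boldsymbol{p})$ and $\boldsymbol{g}(\boldsymbol{p})$. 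In particular, if $\boldsymbol{f}(\boldsymbol{p}) = 0$ and $\boldsymbol{g}(\boldsymbol{p}) = 0$, then $[\boldsymbol{f}, \boldsymbol{g}](\boldsymbol{p}) = 0$ as well.

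The base case is supplied directly by \Cref{assumption1}, which gives $\boldsymbol{F}_0(\boldsymbol{p}^{\mathrm{eq}}) = 0$ and $\boldsymbol{F}_1(\boldsymbol{p}^{\mathrm{eq}}) = 0$. For the inductive step, any iterated Lie bracket $\boldsymbol{B}$ involving only $\boldsymbol{F}_0$ and $\boldsymbol{F}_1$, and of depth at least one, decomposes as $\boldsymbol{B} = [\boldsymbol{B}_1, \boldsymbol{B}_2]$ where $\boldsymbol{B}_1$ and $\boldsymbol{B}_2$ are themselves iterated Lie brackets of strictly smaller depth formed from the same two vector fields. The induction hypothesis yields $\boldsymbol{B}_1(\boldsymbol{p}^{\mathrm{eq}}) = 0$ and $\boldsymbol{B}_2(\boldsymbol{p}^{\mathrm{eq}}) = 0$, and the pointwise observation above forces $\boldsymbol{B}(\boldsymbol{p}^{\mathrm{eq}}) = 0$.

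I do not expect any genuine obstacle here: the argument is a textbook structural induction on the tree of nested brackets, and it uses nothing beyond \Cref{assumption1} together with the coordinate formula for the Lie bracket already recalled in the excerpt. The only mild subtlety to flag is that the induction should be run on the depth of the bracket expression (i.e.\ the total number of bracket operations), so that the decomposition $\boldsymbol{B} = [\boldsymbol{B}_1, \boldsymbol{B}_2]$ produces two factors both of \emph{strictly} smaller depth to which the hypothesis applies.
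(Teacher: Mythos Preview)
Your proposal is correct and follows essentially the same induction-on-length argument as the paper's own proof: both use \Cref{assumption1} for the base case, decompose a longer bracket as $[\boldsymbol{g},\boldsymbol{h}]$ with strictly shorter factors, and invoke the coordinate formula $[\boldsymbol{g},\boldsymbol{h}]=(\boldsymbol{g}\cdot\nabla)\boldsymbol{h}-(\boldsymbol{h}\cdot\nabla)\boldsymbol{g}$ to conclude that vanishing of both factors at $\boldsymbol{p}^{\mathrm{eq}}$ forces the bracket to vanish there.
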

\begin{proof}
We proceed by induction. Let $H_k$ denote the property:  
\emph{“For any iterated Lie brackets of length $k$ that do not contain $\boldsymbol{F}_2$, the bracket vanishes at $\boldsymbol{p}^{\text{eq}}$.”}

\medskip
\noindent Initialization:
For $k = 1$, by \Cref{assumption1}, we have  $\boldsymbol{F}_0(\boldsymbol{p}^{\text{eq}}) = \boldsymbol{F}_1(\boldsymbol{p}^{\text{eq}}) = 0_{\mathbb{R}^n}.$

\medskip
\noindent Induction step:
Let $k \in \mathbb{N}$ and assume that $H_k$ holds.  
Consider an iterated Lie bracket $\boldsymbol{F}$ of length $k+1$ that does not contain $\boldsymbol{F}_2$.  
We can write it as
\begin{equation*}
\boldsymbol{F} = [\boldsymbol{g}, \boldsymbol{h}],
\end{equation*}
where $\boldsymbol{g}$ and $\boldsymbol{h}$ are Lie brackets of length less than or equal to $k$.  
By definition of the Lie bracket, we have
\begin{equation*}
\boldsymbol{F} = (\boldsymbol{g}\cdot\nabla) \boldsymbol{h} - (\boldsymbol{h}\cdot\nabla)\, \boldsymbol{g}.
\end{equation*}
By the induction hypothesis, $\boldsymbol{g}(\boldsymbol{p}^{\text{eq}}) = \boldsymbol{h}(\boldsymbol{p}^{\text{eq}}) = 0_{\mathbb{R}^n}$, and thus  
\begin{equation*}
\boldsymbol{F}(\boldsymbol{p}^{\text{eq}}) = 0_{\mathbb{R}^n}.
\end{equation*}
Therefore, $H_{k+1}$ holds, which completes the proof.
\end{proof}

\begin{lemma}\label{lemma2}
Consider the system \eqref{eq:syst} under \Cref{assumption1} and \Cref{assumption2}. Then, the gradient of the first component of any iterated Lie bracket that does not contain $\boldsymbol{F}_2$ belongs to $\mathrm{Span}(\boldsymbol{e}_1)$.
\end{lemma}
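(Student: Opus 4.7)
The plan is to mimic the structure of Lemma 1 and proceed by induction on the length $k$ of the iterated Lie bracket, with the induction hypothesis $H_k$: \emph{for every iterated Lie bracket $\boldsymbol{F}$ of length at most $k$ not involving $\boldsymbol{F}_2$, the gradient $\nabla F^{(1)}(\boldsymbol{p}^{\mathrm{eq}})$ lies in $\mathrm{Span}(\boldsymbol{e}_1)$}. The base case $k=1$ is immediate from \Cref{assumption2}, which states exactly that $\nabla F_0^{(1)}(\boldsymbol{p}^{\mathrm{eq}})$ and $\nabla F_1^{(1)}(\boldsymbol{p}^{\mathrm{eq}})$ belong to $\mathrm{Span}(\boldsymbol{e}_1)$.

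For the inductive step, I would take a bracket $\boldsymbol{F}=[\boldsymbol{g},\boldsymbol{h}]$ of length $k+1$ in which $\boldsymbol{F}_2$ does not appear, so that $\boldsymbol{g}$ and $\boldsymbol{h}$ are themselves Lie brackets of length at most $k$ not involving $\boldsymbol{F}_2$. Writing the first component explicitly,
\begin{equation*}
F^{(1)} = \sum_i \bigl(g_i\,\partial_i h_1 - h_i\,\partial_i g_1\bigr),
\end{equation*}
I would differentiate with respect to $p_j$ and evaluate at $\boldsymbol{p}^{\mathrm{eq}}$. The key simplification comes from \Cref{lemma1}: since neither $\boldsymbol{g}$ nor $\boldsymbol{h}$ contains $\boldsymbol{F}_2$, both vanish at $\boldsymbol{p}^{\mathrm{eq}}$, which eliminates every term in the product rule where the factor $g_i$ or $h_i$ (rather than its derivative) is evaluated at equilibrium. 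What remains is
\begin{equation*}
\partial_j F^{(1)}(\boldsymbol{p}^{\mathrm{eq}}) = \sum_i\bigl[(\partial_j g_i)(\partial_i h_1) - (\partial_j h_i)(\partial_i g_1)\bigr](\boldsymbol{p}^{\mathrm{eq}}).
\end{equation*}

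By the induction hypothesis, both $\nabla g_1(\boldsymbol{p}^{\mathrm{eq}})$ and $\nabla h_1(\boldsymbol{p}^{\mathrm{eq}})$ lie in $\mathrm{Span}(\boldsymbol{e}_1)$, so the only nonzero terms in the sum above are those with $i=1$. Setting $c_g:=\partial_1 g_1(\boldsymbol{p}^{\mathrm{eq}})$ and $c_h:=\partial_1 h_1(\boldsymbol{p}^{\mathrm{eq}})$, the identity collapses to
\begin{equation*}
\nabla F^{(1)}(\boldsymbol{p}^{\mathrm{eq}}) = c_h\,\nabla g_1(\boldsymbol{p}^{\mathrm{eq}}) - c_g\,\nabla h_1(\boldsymbol{p}^{\mathrm{eq}}),
\end{equation*}
which is a linear combination of two vectors in $\mathrm{Span}(\boldsymbol{e}_1)$ and therefore lies in $\mathrm{Span}(\boldsymbol{e}_1)$. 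This yields $H_{k+1}$ and closes the induction.

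The only subtle point — and the step where I would be most careful — is the bookkeeping of which cross-terms survive when differentiating a product of the form $g_i\,\partial_i h_1$ at $\boldsymbol{p}^{\mathrm{eq}}$. The crucial interplay is that \Cref{lemma1} kills every term containing an undifferentiated $\boldsymbol{g}$ or $\boldsymbol{h}$, while \Cref{assumption2} (propagated through the induction) restricts the Jacobian of $h_1$ (resp.\ $g_1$) to act non-trivially only in the $\boldsymbol{e}_1$ direction. Once both facts are combined, the conclusion is essentially algebraic and does not require any further analytic input.
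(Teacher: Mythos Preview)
Your proof is correct and follows essentially the same route as the paper: induction on the bracket length, using \Cref{lemma1} to kill the second-derivative terms arising from the product rule, and then the induction hypothesis to force all $\partial_i h_1$, $\partial_i g_1$ with $i\ge 2$ to vanish. The only cosmetic difference is that the paper checks $\partial_i F^{(1)}(\boldsymbol{p}^{\mathrm{eq}})=0$ for $i\ge 2$ directly, whereas you package the surviving terms as the identity $\nabla F^{(1)}(\boldsymbol{p}^{\mathrm{eq}}) = c_h\,\nabla g_1(\boldsymbol{p}^{\mathrm{eq}}) - c_g\,\nabla h_1(\boldsymbol{p}^{\mathrm{eq}})$; the underlying computation is identical.
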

\begin{proof}
We proceed by induction. Let $H_k$ denote the property:  
\emph{“For any iterated Lie bracket of length $k$ that does not contain $\boldsymbol{F}_2$, the gradient of its first component belongs to $\mathrm{Span}(\boldsymbol{e}_1)$.”}

\medskip
\noindent Initialization:
For $k = 1$, by \Cref{assumption2}, we have $\nabla F_0^{(1)}(\boldsymbol{p}^{\text{eq}}), \ \nabla F_1^{(1)}(\boldsymbol{p}^{\text{eq}}) \in \mathrm{Span}(\boldsymbol{e}_1).$

\medskip
\noindent Induction step:  
Let $k \in \mathbb{N}$ and assume that $H_k$ holds.  
Consider an iterated Lie bracket $\boldsymbol{F}$ of length $k+1$ that does not contain $\boldsymbol{F}_2$.  
We can write it as
\begin{equation*}
\boldsymbol{F} = [\boldsymbol{g}, \boldsymbol{h}],
\end{equation*}
where $\boldsymbol{g}$ and $\boldsymbol{h}$ are Lie brackets of length less than or equal to $k$.  
For simplicity, we omit the evaluation at $\boldsymbol{p}^{\text{eq}}$ in what follows.  
By definition of the Lie bracket, for all $i \in \{2, \ldots, n\}$, we have
\begin{multline*}
\partial_i F^{(1)} = \sum_{j=1}^n \Big( 
\partial_i g^{(j)} \, \partial_j h^{(1)} 
- \partial_i h^{(j)} \, \partial_j g^{(1)} 
\\+ g^{(j)} \, \partial_i \partial_j h^{(1)} 
- h^{(j)} \, \partial_i \partial_j g^{(1)}
\Big).
\end{multline*}
By \Cref{lemma1}, both $\boldsymbol{g}$ and $\boldsymbol{h}$ vanish at the equilibrium, and therefore the last two terms are zero. Hence,
\begin{multline*}
\partial_i F^{(1)} = 
\partial_i g^{(1)} \, \partial_1 h^{(1)} 
- \partial_i h^{(1)} \, \partial_1 g^{(1)} 
\\+ \sum_{j=2}^n \left(
\partial_i g^{(j)} \, \partial_j h^{(1)} 
- \partial_i h^{(j)} \, \partial_j g^{(1)}
\right).
\end{multline*}
By the induction hypothesis, all partial derivatives $\partial_i$ of the first component with $i \ge 2$ vanish at equilibrium.  
Therefore, $\partial_i F^{(1)}$ also vanishes at equilibrium for all $i \in \{2, \ldots, n\}$.  
We thus conclude that $H_{k+1}$ holds, which completes the proof.
\end{proof}

\begin{lemma}\label{lemma3}
Consider the system \eqref{eq:syst} under \Cref{assumption1}, \Cref{assumption2}, and \Cref{assumption3}. Then, any iterated Lie bracket involving $\boldsymbol{F}_2$ exactly once has a vanishing first component at the equilibrium point.
\end{lemma}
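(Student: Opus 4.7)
The plan is to prove \Cref{lemma3} by induction on the length $k$ of the iterated Lie bracket, mirroring the structure used in \Cref{lemma1} and \Cref{lemma2}, and leveraging both previous lemmas as the essential input to handle the boundary terms.

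For the base case $k=1$, the only iterated bracket of length one containing $\boldsymbol{F}_2$ exactly once is $\boldsymbol{F}_2$ itself, and its first component vanishes at $\boldsymbol{p}^{\text{eq}}$ directly by \Cref{assumption3}.

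For the induction step, I would assume the property up to length $k$ and take a bracket $\boldsymbol{F}=[\boldsymbol{g},\boldsymbol{h}]$ of length $k+1$ containing $\boldsymbol{F}_2$ exactly once. By antisymmetry of the Lie bracket we may suppose without loss of generality that $\boldsymbol{F}_2$ appears exactly once in $\boldsymbol{g}$, while $\boldsymbol{h}$ does not contain $\boldsymbol{F}_2$. Then I expand at the equilibrium, dropping the evaluation point from the notation,
\begin{equation*}
F^{(1)} = \sum_{j=1}^n \bigl( g^{(j)}\,\partial_j h^{(1)} - h^{(j)}\,\partial_j g^{(1)} \bigr).
\end{equation*}
Each of the four types of factor can now be controlled. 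The factors $h^{(j)}$ vanish at $\boldsymbol{p}^{\text{eq}}$ by \Cref{lemma1}, since $\boldsymbol{h}$ does not contain $\boldsymbol{F}_2$, killing the whole second sum. For the first sum, the $j=1$ term vanishes because $g^{(1)}(\boldsymbol{p}^{\text{eq}})=0$ by the induction hypothesis applied to $\boldsymbol{g}$ (a bracket of length at most $k$ containing $\boldsymbol{F}_2$ exactly once), and for $j\geq 2$ the factor $\partial_j h^{(1)}$ vanishes by \Cref{lemma2} applied to $\boldsymbol{h}$, whose first-component gradient lies in $\mathrm{Span}(\boldsymbol{e}_1)$. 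Hence $F^{(1)}(\boldsymbol{p}^{\text{eq}})=0$, closing the induction.

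No step is really hard here: the argument is essentially bookkeeping on which of the two bracket operands contains $\boldsymbol{F}_2$ and then invoking the right previous lemma to annihilate each term. The only point requiring mild care is the initial WLOG reduction, which must be justified by the antisymmetry $[\boldsymbol{g},\boldsymbol{h}]=-[\boldsymbol{h},\boldsymbol{g}]$ so that the vanishing of the first component is unaffected by swapping $\boldsymbol{g}$ and $\boldsymbol{h}$; everything else follows the same pattern already rehearsed in \Cref{lemma1} and \Cref{lemma2}.
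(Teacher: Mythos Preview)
Your proof is correct and follows essentially the same argument as the paper: both proceed by induction on the bracket length, reduce via antisymmetry to the case where $\boldsymbol{F}_2$ lies in $\boldsymbol{g}$, kill the $h^{(j)}\partial_j g^{(1)}$ terms with \Cref{lemma1}, the $j=1$ term of the remaining sum with the induction hypothesis, and the $j\ge 2$ terms with \Cref{lemma2}. If anything, your write-up is slightly more explicit in justifying the WLOG step.
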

\begin{proof}
We proceed by induction. Let $H_k$ denote the property:  
\emph{“For any iterated Lie bracket of length $k$ that contains exactly one occurrence of $\boldsymbol{F}_2$, its first component vanishes at equilibrium.”}

\medskip
\noindent Initialization: 
For $k = 1$, by \Cref{assumption3}, we have $
F_2^{(1)}(\boldsymbol{p}^{\text{eq}}) = 0_{\mathbb{R}^n}$

\medskip
\noindent Induction step: 
Let $k \in \mathbb{N}$ and assume that $H_k$ holds.  
Consider an iterated Lie bracket $\boldsymbol{F}$ of length $k+1$ that contains exactly one occurrence of $\boldsymbol{F}_2$.  
We can write it as
\begin{equation*}
\boldsymbol{F} = [\boldsymbol{g}, \boldsymbol{h}],
\end{equation*}
where $\boldsymbol{g}$ and $\boldsymbol{h}$ are Lie brackets of length less than or equal to $k$.  
Moreover, exactly one of them contains $\boldsymbol{F}_2$; without loss of generality, assume it is $\boldsymbol{g}$.  
For simplicity, we omit the evaluation at $\boldsymbol{p}^{\text{eq}}$ in what follows.  
By definition of the Lie bracket, we have
\begin{equation*}
F^{(1)} = \sum_{j=1}^n \left( g^{(j)} \, \partial_j h^{(1)} - h^{(j)} \, \partial_j g^{(1)} \right).
\end{equation*}
By \Cref{lemma1}, $\boldsymbol{h}$ vanishes at equilibrium, and by the induction hypothesis, $g^{(1)}$ also vanishes at equilibrium.  
Hence,
\begin{equation*}
F^{(1)} = \sum_{j=2}^n g^{(j)} \, \partial_j h^{(1)}.
\end{equation*}
Applying \Cref{lemma2} to $\boldsymbol{h}$, we deduce that $\partial_j h^{(1)} = 0$ for all $j \ge 2$ at equilibrium.  
Therefore, $F^{(1)}$ vanishes at equilibrium, and thus $H_{k+1}$ holds.  
This completes the proof.
\end{proof}

\begin{proposition}\label{prop1}
    Consider the system \eqref{eq:syst}
 under \Cref{assumption1}, \Cref{assumption2} and \Cref{assumption3}. Then $R_1\subset\mathrm{Span}(\boldsymbol{e}_2, \ldots, \boldsymbol{e}_n)$.
 \end{proposition}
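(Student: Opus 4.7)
The plan is to exploit the definition of $R_1$ directly: since its spanning set $\mathcal{R}_1$ consists of iterated Lie brackets of $\boldsymbol{F}_0$ and $\boldsymbol{F}_2$ in which $\boldsymbol{F}_2$ appears at most once, one can split the argument into two exhaustive cases according to whether $\boldsymbol{F}_2$ occurs zero or exactly one time, and then invoke the preceding lemmas. So I would fix an arbitrary $\boldsymbol{F}\in\mathcal{R}_1$ and show that $\boldsymbol{F}(\boldsymbol{p}^{\text{eq}})\in\mathrm{Span}(\boldsymbol{e}_2,\ldots,\boldsymbol{e}_n)$; the conclusion then follows by linearity of $\mathrm{Span}$.

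For the first case, where $\boldsymbol{F}$ is an iterated Lie bracket of $\boldsymbol{F}_0$ alone (including the length-$1$ generator $\boldsymbol{F}_0$ itself, which vanishes at equilibrium by \Cref{assumption1}), \Cref{lemma1} immediately gives $\boldsymbol{F}(\boldsymbol{p}^{\text{eq}})=0_{\mathbb{R}^n}$, which trivially lies in $\mathrm{Span}(\boldsymbol{e}_2,\ldots,\boldsymbol{e}_n)$.

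For the second case, where $\boldsymbol{F}_2$ appears exactly once, I would first handle the length-$1$ bracket $\boldsymbol{F}=\boldsymbol{F}_2$ separately: by \Cref{assumption3}, the first component of $\boldsymbol{F}_2(\boldsymbol{p}^{\text{eq}})$ is zero, so $\boldsymbol{F}_2(\boldsymbol{p}^{\text{eq}})\in\mathrm{Span}(\boldsymbol{e}_2,\ldots,\boldsymbol{e}_n)$. For any longer iterated Lie bracket containing $\boldsymbol{F}_2$ exactly once, \Cref{lemma3} directly asserts that $F^{(1)}(\boldsymbol{p}^{\text{eq}})=0$, hence $\boldsymbol{F}(\boldsymbol{p}^{\text{eq}})\in\mathrm{Span}(\boldsymbol{e}_2,\ldots,\boldsymbol{e}_n)$ as well.

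There is no genuine obstacle here: the three preceding lemmas were tailored exactly to cover the two cases that appear in the definition of $R_1$, and the proposition is essentially their assembly. The only care required is to make sure the case split is exhaustive (in particular, not to forget the length-$1$ generators $\boldsymbol{F}_0$ and $\boldsymbol{F}_2$, which are not covered by the induction steps of the lemmas but follow directly from \Cref{assumption1} and \Cref{assumption3}).
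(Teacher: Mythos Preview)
Your proposal is correct and matches the paper's own proof, which simply states that the result follows directly from combining \Cref{lemma1} and \Cref{lemma3}. Your explicit treatment of the length-$1$ generators is harmless but unnecessary, since those are precisely the base cases $k=1$ already handled in the initializations of the two lemmas.
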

\begin{proof}
The result follows directly from the combination of \Cref{lemma1} and \Cref{lemma3}.
\end{proof}

\subsection{\texorpdfstring{Proof of \Cref{theorem:nonbstlc}}{Proof of Theorem 3}}

The linear map $AQB$ \eqref{eq:noninvertsyst} belongs to the set of matrices whose entries are analytic functions on $\mathbb{R}^2\times[0,2\pi]^3$. The family of vector fields $(\boldsymbol{F}_i)_{i=0}^2$ is obtained by multiplying by $(AQB)^{-1}$.  
Using symbolic computation (see \eqref{eq:detaqb}), the determinant $\det(AQB)$ admits a non-vanishing zeroth-order expansion around $0_{\mathbb{R}^5}$, which is strictly negative for all physically admissible parameters.  
Since the coefficients of $(AQB)^{-1}$ are obtained by multiplication and division of analytic functions and $\det(AQB)$ never vanishes near $0_{\mathbb{R}^5}$, the entries of $(AQB)^{-1}$ remain analytic in this neighborhood. Consequently, the family of vector fields $(\boldsymbol{F}_i)_{i=0}^2$ consists of analytic functions around $0_{\mathbb{R}^5}$.\\

\noindent Symbolic verification confirms that \Cref{assumption1} (as $\boldsymbol{F}_2(0)\neq 0$ for positive physical parameters), as well as \Cref{assumption2} and \Cref{assumption3}, are satisfied. Hence, by \Cref{prop1}, we have $R_1 \subset \mathrm{Span}(\boldsymbol{e}_2, \ldots, \boldsymbol{e}_5).$\\

\noindent \textbf{Case 1:} $\alpha \neq 0$.  
In this case, $\boldsymbol{F}_{212}(0) = 0$ and, according to \eqref{eq:F202}, we have $\boldsymbol{F}_{202}(0) \notin \mathrm{Span}(\boldsymbol{e}_2, \ldots, \boldsymbol{e}_5).$ Hence, $\boldsymbol{F}_{202}(0) \notin R_1 + \mathrm{Span}(\boldsymbol{F}_{212}(0)),$ and by applying \Cref{theorem32}, we deduce the desired result.\\

\noindent \textbf{Case 2:} $\alpha = 0$.  
Here, we have  $\boldsymbol{F}_{202}(0) = \boldsymbol{F}_{212}(0) = \boldsymbol{F}_{12,02}(0) = 0 \in R_1$  and $R' = 0.$  All Lie brackets appearing in \eqref{eq:Deqlamb} vanish except 
\begin{equation*}
\boldsymbol{F}_{02,002}(0) = \gamma \boldsymbol{e}_1.
\end{equation*}
Let $u_1^{\text{eq}} \in \mathbb{R}$. Then
\begin{equation*}
D_{u_1^{\text{eq}}}(\lambda_1, \lambda_2) = \lambda_1^2 \gamma \boldsymbol{e}_1.
\end{equation*}
By taking $\varphi = \gamma \boldsymbol{e}_1^{*}$ in the definition of \eqref{eq:CQ} and applying \Cref{theorem38}, we conclude that the system is not ($W^{1,\infty}$, B)-STLC.  \hfill $\blacksquare$

\section{Numerical Results}\label{sec:5}

\subsection{Local behavior near the equilibrium}
To numerically investigate the local behavior of the system around the equilibrium, we simulate trajectories corresponding to controls near the equilibrium control (which corresponds to the null control), as done in \cite{moreau2019}. Let $\varepsilon > 0$ be a small parameter. We define
\begin{equation}\label{eq:random_control}
\begin{array}{cc}
    u_1(t) = \varepsilon \left( \eta_1 + \eta_2 \cos(10t) + \eta_3 \cos(100t) \right), \\
    u_2(t) = \varepsilon \left( \eta_4 + \eta_5 \cos(10t) + \eta_6 \cos(100t) \right),
\end{array}
\end{equation}
where the coefficients $(\eta_i)_{i=1}^6$ are randomly sampled from the interval $[-1,1]$. By performing $N_{\text{MC}}$ realizations of these random controls, starting from the equilibrium state at the origin, we aim to identify, through such oscillatory control inputs, the regions that remain unreachable within a small time horizon $T$. The results are shown in \Cref{fig:traj_random_2_10_links} for the case of a $2$-link swimmer and for a $10$-link swimmer. When zooming in near the origin, a very thin area appears in both cases (when $x \to 0^{+}$) that remains unreachable.

\begin{figure}[tb]
    \centering
    \includegraphics[width=\linewidth]{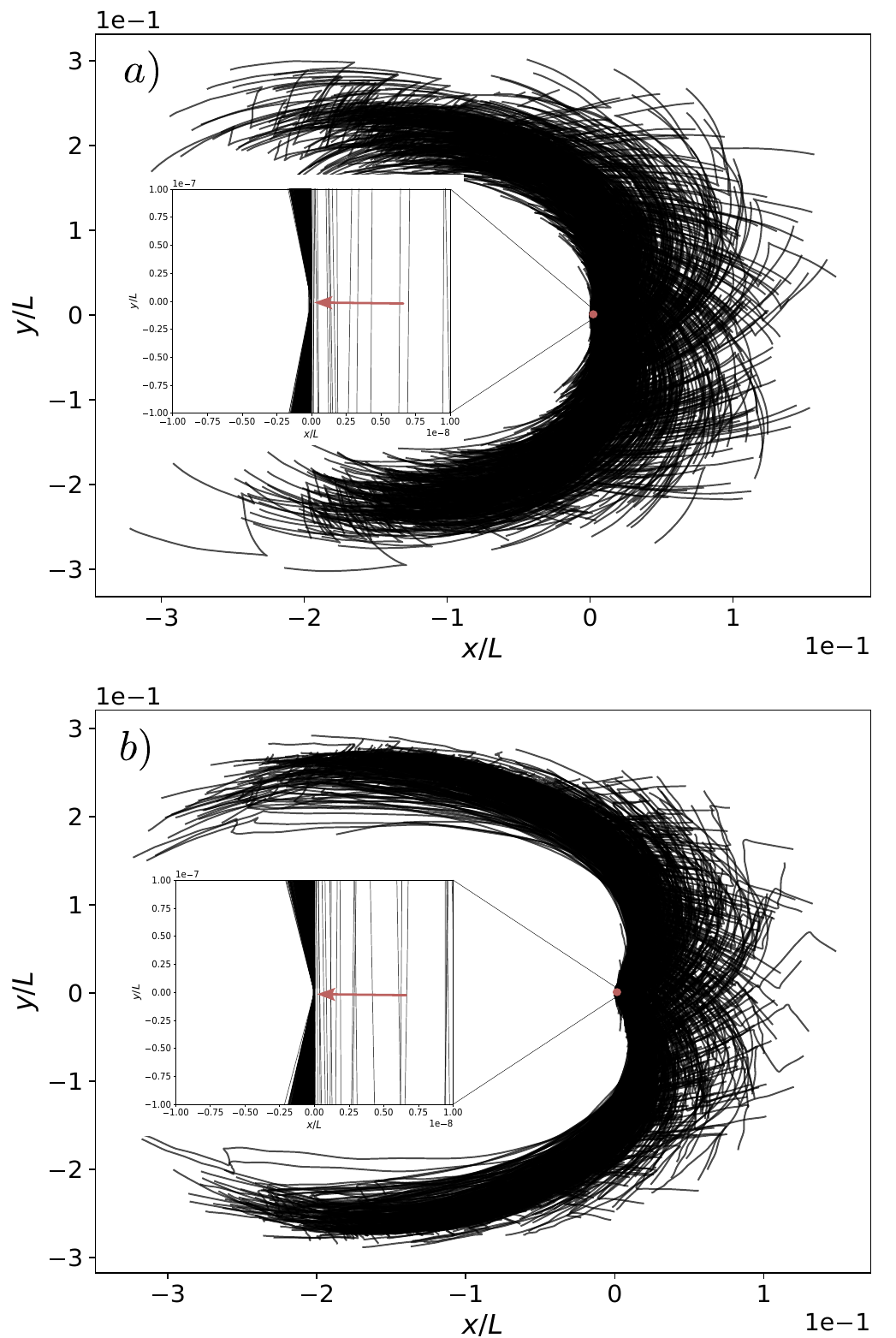}
    \caption{Trajectories around origin equilibrium for $N_{\text{MC}} = 2000$ realizations under the random oscillating control \eqref{eq:random_control}. $a)$ Trajectories of the $2$-link swimmer. $b)$ Trajectories of the $10$-link swimmer. A zoom on the neighborhood of the origin is shown for both plots. The parameters are taken from \Cref{table:swimmerparameters}, with $\varepsilon = 1$ and $T = 1$.
}
    \label{fig:traj_random_2_10_links}
\end{figure}

\subsection{Trajectory tracking}

Despite the lack of local controllability near equilibrium, the $2$-link swimmer can still be guided along desired trajectories. Let the reference trajectory be denoted by $\boldsymbol{p}_{\text{ref}}: [0,T] \to \mathbb{R}^2$,
with the final target state $\boldsymbol{p}_{\text{final}} = \boldsymbol{p}_{\text{ref}}(T)$. For a given control $\boldsymbol{u}$, we denote by $\boldsymbol{p}_{\boldsymbol{u}}$ the corresponding trajectory of the system. To achieve trajectory tracking, we formulate an optimal control problem whose cost functional is defined as
\begin{equation}\label{eq:cost_func}
J(\boldsymbol{p}_{\boldsymbol{u}},\boldsymbol{u})
    := \int_{0}^T \left\|\boldsymbol{p}_{\boldsymbol{u}}(t) - \boldsymbol{p}_{\text{ref}}(t)\right\|_{Q}^2 \, dt
    + \left\|\boldsymbol{p}_{\boldsymbol{u}}(T) - \boldsymbol{p}_{\text{final}}\right\|_{S}^2,
\end{equation}  
where $Q$ and $S$ are symmetric positive definite weighting matrices. The weighted norm is given by $\|\boldsymbol{p}\|_{Q}^2 = \boldsymbol{p}^\top Q \boldsymbol{p}$. The first term in \eqref{eq:cost_func} penalizes deviations from the reference trajectory along the time horizon, while the second term enforces precision at the terminal state. To solve this problem, we employ Bayesian optimization techniques, specifically the SCBO algorithm \cite{eriksson_scalable_2021}, which is particularly well-suited for large-scale optimization as used in \cite{palazzolo2025}. The control functions are parameterized using B-splines \cite{piegl1997}. A \emph{B-spline curve of degree $d$} is defined by
\begin{equation*}
    S(t):= \sum_{i=0}^N S_{i,d}(t) P_i,
\end{equation*}
where $S_{i,d}$ denotes the $i$-th B-spline basis function of degree $d$ defined over a \emph{knot vector} $\mathscr{T}$, and $P_i \in \mathbb{R}$ is the $i$-th control point. In what follows, we consider uniform knot vectors of the form
\begin{equation}\label{eq:knot_vector}
    \mathscr{T} = \{\underbrace{t_0,\ldots,t_0}_{d+1},\, t_1,\ldots,t_{n-1},\, \underbrace{t_n,\ldots,t_n}_{d+1}\},
\end{equation}
with equally spaced interior knots. The multiplicity of the first and last knots being $d+1$ ensures that the B-spline interpolates the first and last control points at $t=t_0$ and $t=t_n$. Since B-splines are entirely bounded by their control points, we define the following admissible control space:
{\small
\begin{equation}\label{eq:adm_control_nlinks_splines}
    \tilde{\mathscr{U}}_{\text{ad}}^{x,y} 
    := 
    \Biggl\{ \boldsymbol{u} = \left(\sum_{i=0}^{N_{u_1}} S_{i,d_1} P^{u_1}_i, \sum_{i=0}^{N_{u_2}} S_{i,d_2} P^{u_2}_i\right) \;\Bigg|\;
       |P^{u_1}_i| \leq M,\; |P^{u_2}_i| \leq M\Biggr\},
\end{equation}}
with $M=0.01$ (as in \cite{faris2020, oulmas2017}). Finally, the optimal control problem can be written as 
\begin{equation}\label{eq:pb_opti}
    \inf_{\boldsymbol{u}\in {\tilde{\mathscr{U}}_{\text{ad}}^{x,y}}} \; J(\boldsymbol{p}_{\boldsymbol{u}},\boldsymbol{u})
    \quad \text{subject to} (\boldsymbol{p}_{\boldsymbol{u},\boldsymbol{u})} \text{ solution  of } \eqref{eq:systplan}.
\end{equation} 

\noindent We look at a series of trajectories that are elliptical path segments of length $L$. Thus, the reference trajectory is defined by 
\begin{equation}\label{eq:ellipse_traj}
\boldsymbol{p}_{\text{ref}}(t) = \begin{bmatrix}a + a \cos(-\frac{t}{T} s_{\text{end}}+\pi)\\ b \sin(-\frac{t}{T}s_{\text{end}}+\pi)\\
0_{\mathbb{R}^3}\end{bmatrix},
\end{equation}
where the parameters $a$ and $b$ denote the semi-axes of the ellipse, and $s_{\text{end}}$ is chosen such that the total arc length satisfies
\begin{equation*}
L = \int_{0}^{s_{\text{end}}}\sqrt{a^2\sin^2(s) + b^2\cos^2(s)}\, ds.
\end{equation*}
The value of $s_{\text{end}}$ is determined numerically using a bisection method. We set $a = L$ and vary $b$ in $\{\tfrac{L}{2},\, L,\, \tfrac{3L}{2}\}$ in order to generate ellipses with different eccentricities. The final time is fixed at $T = 3$, and the controls are parameterized with $40$ control points per spline. The cost function \eqref{eq:cost_func} is evaluated using weighting matrices $Q = 10^9\diag(1,1,0,0,0)$ and $S = 10^4\diag(1,1,0,0,0)$. The resulting optimal trajectories are compared to those obtained using classical sinusoidal control, as proposed in \cite{Dreyfus2005, oulmas2017}. As illustrated in \Cref{fig:N_link_2D_ellipses}, the optimized trajectories exhibit significantly improved tracking performance, closely following the reference paths. Moreover, the optimal controls naturally adapt to the curvature of each reference trajectory, demonstrating the benefits of the B-spline–based flexible control design. Interestingly, the resulting optimal trajectories appear to display a quasi-periodic behavior.

\begin{figure}[tb]
    \centering
    \includegraphics[width=1\linewidth]{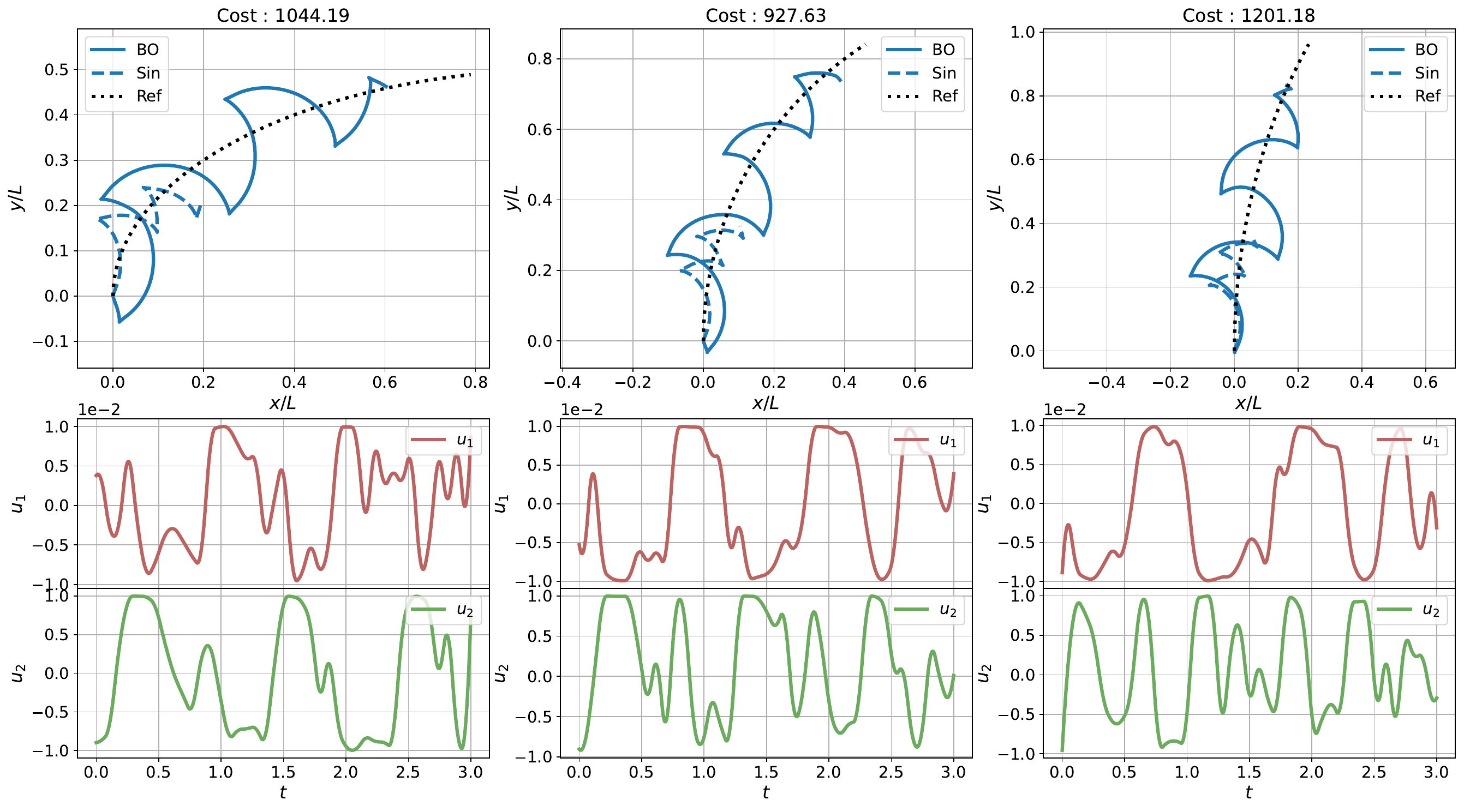}
    \caption{Trajectories of $2$-link swimmer for portion of elliptical references. From left to right: results for $b = \frac{L}{2}$, $b = L$, and $b = \frac{3L}{2}$. Top row: optimal trajectory (blue), trajectory under sinusoidal magnetic field of frequency $0.7Hz$ with tangent alignment (dashed line), and reference trajectory \eqref{eq:ellipse_traj} (black). Bottom row: optimized controls $u_1$ (red) and $u_2$ (green). The swimmer parameters are taken from \Cref{table:swimmerparameters}.}
    \label{fig:N_link_2D_ellipses}
\end{figure}

\section{Conclusions}\label{sec:6}

This study examined the controllability of a magnetic microswimmer composed of a magnetic head and an elastic flagellum. The analysis revealed that the swimmer is not small-time locally controllable in planar motion. Numerical simulations illustrated the existence of inaccessible regions near equilibrium and showed that, despite this limitation, trajectory tracking remains possible via Bayesian optimization. 

Future work will focus on two main directions. First, we aim to extend the theoretical analysis to three-dimensional trajectories, which would require adding an additional vector field to the dynamics, thus moving beyond the framework considered in \cite{moreau2024}. Second, we plan to investigate trajectory tracking in more complex environments, including the presence of boundaries or background flows, to better capture realistic microfluidic conditions.\\

\noindent \textbf{Supplemental Material:} The \texttt{Wolfram} code for computing symbolic expressions of the Lie brackets is available at the GitHub repository: \url{https://github.com/Luplz/MagneticSwimmer-LieBrackets}.



\section*{Appendices}

\subsection{\texorpdfstring{Coefficient of \eqref{eq:F202}}{Coefficient of (6)}}
The detailed expression of $\alpha$ introduced in \eqref{eq:F202} is given below:
\begin{equation}\label{eq:alpha}
    \alpha = \frac{\alpha_1}{\alpha_2}
\end{equation}
{\footnotesize
\begin{multline*}
    \alpha_1 = 324 k_{\text{el}}m^2\Big(132k_{\perp}^3l^4-4k_{\parallel}(k^h_{\perp})^2 r^2(5l^2-38lr+30r^2)\\+k_{\perp}^2(-132 k_{\parallel}l^4+l^2r(105 k^h_{\perp}l+47 k^h_{\parallel}l-252 k^h_{\perp}r))\\
    +2k_{\perp}k^h_{\perp}r(k^h_{\parallel}l(l-25r)r+2k_{\parallel}l^2(-38l+63r)\\+k^h_{\perp}r(9l^2-51lr+60r^2))\Big)
\end{multline*}
\vspace{-2em}
\begin{multline*}
    \alpha_2 = k_{\perp}l^3 (2k_{\parallel}l+k^h_{\parallel}r)\\\times\Big(14k_{\perp}^2l^4+42k^h_{\perp}k_r r^4+k_{\perp}lr(12k_r r^2+k^h_{\perp}(49 l^2+39lr+12r^2))\Big)^2
\end{multline*}
}

\subsection{Determinant of $AQB$} 

Using symbolic computation (see Supplemental Material), the zeroth-order series expansion of $\det(AQB)$ around $0_{\mathbb{R}^5}$ is
{\footnotesize
\begin{multline}\label{eq:detaqb}
\det(AQB) = -\frac{1}{216}\Big[k_{\perp}^2 l^6 (2k_{\parallel}l + k_{\perp}^h r)\\
\times\Big(14k_{\perp}^2 l^4 + 42k_{\perp}^h k_r r^4 + k_{\perp} l r \\
\times(12k_r r^2 + k_{\perp}^h(49l^2 + 39lr + 12r^2))\Big)\Big]\\ + O(\|\boldsymbol{p
}\|).
\end{multline}}

\subsection{Simulation parameters}
\begin{table}[h]
\caption{Parameters of the model used for the simulations, taken from \cite{faris2020} and fitted with an experimental swimmer.}
\label{table:swimmerparameters}
\begin{center}
\begin{tabular}{l l c}
\hline
\hline
Parameter & Symbol & Value\\
\hline
Length of the tail & $L$ & $7~\mathrm{mm}$\\
Radius of the head & $r$ & $0.3~\mathrm{mm}$\\
Parallel resistive coefficient (head) & $k^h_{\parallel}$ & $1.15~\mathrm{N\,s\,m^{-1}}$\\
Orthogonal resistive coefficient (head) & $k^h_{\perp}$ & $4.37~\mathrm{N\,s\,m^{-1}}$\\
Rotational resistive coefficient (head) & $k_r$ & $0.6~\mathrm{N\,s\,m}$\\
Parallel resistive coefficient (tail) & $k_{\parallel}$ & $0.35~\mathrm{N\,s\,m^{-1}}$\\
Orthogonal resistive coefficient (tail) & $k_{\perp}$ & $0.81~\mathrm{N\,s\,m^{-1}}$\\
Elastic coefficient & $k_{\text{el}}$ & $8.68\times10^{-7}~\mathrm{N\,m^{-1}}$\\
Magnetization & $m$ & $1.68\times10^{-4}~\mathrm{A\,m^{-1}}$\\
\hline
\hline
\end{tabular}
\end{center}
\end{table}

\section*{Acknowledgment}

The authors acknowledge the support of the French Agence Nationale de la Recherche
(ANR), under Grant No.
ANR-21-CE45-0013, Project NEMO. 

\addtolength{\textheight}{-12cm}   

\bibliographystyle{IEEEtran}
\bibliography{biblio}

\end{document}